\theoremstyle{plain}
\newtheorem{lemma}{Lemma}
\newtheorem{prop}[lemma]{Proposition}
\newtheorem{coro}[lemma]{Corollary}
\newtheorem{theorem}[lemma]{Theorem}
\newtheorem{conj}[lemma]{Conjecture}
\newtheorem*{prop*}{Proposition}
\theoremstyle{definition}
\theoremstyle{remark}
\numberwithin{equation}{section}
\newenvironment{enumeratei}{\begin{enumerate}[\upshape (i)]}{\end{enumerate}}
\newcommand{\Ric}{\textup{Ric}}
\newcommand{\p}{\partial}
\begin{document}
\title[Holomorphic curvature and canonical bundle]{Negative Holomorphic curvature and positive canonical bundle}

\author{Damin Wu}
\address{Department of Mathematics\\
University of Connecticut\\
196 Auditorium Road,
Storrs, CT 06269-3009, USA}

\email{damin.wu@uconn.edu}

\author{Shing--Tung Yau}
\address{Department of Mathematics \\
				Harvard University \\
				One Oxford Street, Cambridge MA 02138}
\email{yau@math.harvard.edu}
\thanks{The first author was partially supported by the NSF grant DMS-1308837. The second author was partially supported by the NSF grant DMS-0804454.}
\maketitle

\begin{abstract} 
In this note we show that if a projective manifold admits a K\"ahler metric with negative holomorphic sectional curvature then the canonical bundle of the manifold is ample. This confirms a conjecture of the second author.
\end{abstract}

\maketitle

\section{Introduction}

A fundamental question regarding the geometry of a \emph{projective manifold}, i.e., a nonsingular complex projective variety, is to charaterize the positivity of its canonical bundle. 
In algebraic geometry the abundance conjecture predicts that the canonical bundle is semiample if it is nef~\cite{Kaw}. In hyperbolic geometry a conjecture of Kobayashi asserts that the canonical bundle is ample if the manifold is hyperbolic \cite[p. 370]{SKob}, while Lang conjectured that a projective manifold is hyperbolic if and only if every submanifold has big canonical bundle~\cite[p. 190]{Lang}.

From the viewpoint of differential geometry, the canonical bundle can be represented by the Ricci curvature up to a sign. The hyperbolicity is assured by the negativity of holomorphic sectional curvature. The conjectures in hyperbolic geometry naturally connect to the very basic question in complex differential geometry, that is, to understand the mysterious relation between the Ricci curvature and the holomorphic sectional curvature (cf. \cite[p. 181]{Zheng}).

A conjecture of the second author asserts that the holomorphic sectional curvature determines the Ricci curvature, in the sense that if a projective manifold admits a K\"ahler metric of negative holomorphic sectional curvature then it also admits a K\"ahler metric of negative Ricci curvature. 
In view of his resolution of Calabi's conjecture, the second author's conjecture can be stated as below (cf. \cite[Conjecture 1.2]{HLW2})
\begin{conj} \label{co:Yau}
If a projective manifold admits a K\"ahler metric with negative holomorphic sectional curvature then the canonical bundle of the manifold is ample.
\end{conj}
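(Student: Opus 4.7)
The plan is to establish ampleness of $K_X$ by producing, via a limit of twisted K\"ahler--Einstein metrics, a K\"ahler form representing $c_1(K_X)$. I would organize the argument into three stages: nefness of $K_X$, existence of twisted K\"ahler--Einstein metrics in perturbed classes, and uniform a priori estimates as the perturbation parameter tends to zero.

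Let $\omega_0$ denote the given K\"ahler metric, with holomorphic sectional curvature $H(\omega_0) \leq -\kappa < 0$. First I would verify that $K_X$ is nef. Negative holomorphic sectional curvature forces $X$ to be Brody hyperbolic via Yau's Schwarz lemma applied to holomorphic maps $\mathbb{C} \to X$; hence $X$ contains no rational curves, which together with cone-theoretic arguments rules out the $K_X$-negative extremal rays obstructing nefness of $K_X$.

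Granting nefness, for each $\varepsilon > 0$ the class $\varepsilon[\omega_0] + 2\pi c_1(K_X)$ is K\"ahler, so the Aubin--Yau theorem produces a unique smooth K\"ahler metric $\omega_\varepsilon$ in this class satisfying
\begin{equation*}
\Ric(\omega_\varepsilon) = -\omega_\varepsilon + \varepsilon\,\omega_0.
\end{equation*}
Writing $\omega_\varepsilon = \varepsilon\omega_0 + \eta + \sqrt{-1}\,\p\bar\p u_\varepsilon$ for a fixed smooth Chern representative $\eta$ of $2\pi c_1(K_X)$, the heart of the argument is a $C^{0}$ estimate for $u_\varepsilon$ uniform in $\varepsilon$. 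I would bound $\tr_{\omega_\varepsilon}\omega_0$ from above by applying the maximum principle to a test function of the form $\log \tr_{\omega_\varepsilon}\omega_0 - A u_\varepsilon$ for a suitable constant $A$. Computing $\Delta_{\omega_\varepsilon}\log\tr_{\omega_\varepsilon}\omega_0$ in the standard way produces a bisectional curvature term of $\omega_0$; here the key device is a Royden-type algebraic refinement of Yau's Schwarz lemma, which allows this offending bisectional expression to be controlled by $-c\,\kappa\,(\tr_{\omega_\varepsilon}\omega_0)^2$ using only the hypothesis $H(\omega_0) \leq -\kappa$. Together with the Ricci identity this should yield, at a maximum point of the test function, an inequality of the form
\begin{equation*}
0 \;\geq\; c\,\kappa\,\tr_{\omega_\varepsilon}\omega_0 - C,
\end{equation*}
uniformly in $\varepsilon$, hence a bound on $\tr_{\omega_\varepsilon}\omega_0$ and in turn on $\|u_\varepsilon\|_{C^0}$.

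From this trace estimate the Calabi third-order computation delivers higher-order bounds, so a subsequence of $\omega_\varepsilon$ converges smoothly to a K\"ahler--Einstein metric $\omega_\infty$ in $c_1(K_X)$ with $\Ric(\omega_\infty) = -\omega_\infty$; consequently $c_1(K_X)$ is represented by a K\"ahler form and Kodaira's embedding theorem concludes that $K_X$ is ample. The principal obstacle is the $C^{0}$ estimate: the customary Schwarz-lemma calculations consume the bisectional curvature, which is not available here, so one must verify that the weaker holomorphic sectional curvature bound, routed through a pointwise algebraic inequality at a point where $\omega_\varepsilon$ and $\omega_0$ are simultaneously diagonalized, nevertheless suffices to close the maximum principle inequality with constants uniform in $\varepsilon$.
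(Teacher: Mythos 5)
Your proposal is correct and follows essentially the same route as the paper: nefness via hyperbolicity and the Cone Theorem, twisted Monge--Amp\`ere metrics $\omega_\varepsilon$ with $\Ric(\omega_\varepsilon) = -\omega_\varepsilon + \varepsilon\omega_0$, a uniform trace bound obtained from Royden's refinement of the Schwarz lemma (the paper's Proposition~\ref{pr:WYZ}), and passage to a K\"ahler--Einstein limit. This matches the paper's alternative proof (Theorem~\ref{th:WY2}) rather than its primary route, which stops at the integral inequality $\int_X c_1(K_X)^n > 0$ and invokes nef-plus-big plus absence of rational curves to conclude ampleness without the higher-order estimates.
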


In the present paper, we prove the full statement of Conjecture~\ref{co:Yau}.

\begin{theorem}\label{th:WY}
Let $X$ be an $n$-dimensional projective manifold. If $X$ admits a K\"ahler metric whose holomorphic sectional curvature is negative everywhere, then the canonical bundle $K_X$ is ample.
\end{theorem}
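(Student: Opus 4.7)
My plan is to produce a smooth K\"ahler metric representing $c_1(K_X)$; once this is achieved, the Kodaira embedding theorem immediately gives the ampleness of $K_X$. The construction is by a continuity method: solve a family of complex Monge--Amp\`ere equations parametrized by $\varepsilon > 0$ and let $\varepsilon \to 0$, with the negativity hypothesis on the holomorphic sectional curvature entering through a Schwarz-lemma estimate of Yau--Royden type.

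Let $\omega_0$ denote the given K\"ahler metric, with holomorphic sectional curvature bounded above by $-\kappa < 0$, and fix a smooth $(1,1)$-form $\chi$ representing $c_1(K_X)$; concretely one may take $\chi = \sqrt{-1}\,\p\bar{\p}\log\Omega$ for a smooth positive volume form $\Omega$ on $X$. For each $\varepsilon \in (0, 1]$ I would consider
\begin{equation*}
(\varepsilon\omega_0 + \chi + \sqrt{-1}\,\p\bar{\p}\varphi_\varepsilon)^n = e^{\varphi_\varepsilon}\Omega, \qquad \omega_\varepsilon := \varepsilon\omega_0 + \chi + \sqrt{-1}\,\p\bar{\p}\varphi_\varepsilon > 0.
\end{equation*}
A smooth solution gives $\Ric(\omega_\varepsilon) = -\omega_\varepsilon + \varepsilon\omega_0$. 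For $\varepsilon$ sufficiently large, the class $[\chi + \varepsilon\omega_0]$ is manifestly K\"ahler and solvability follows from the Aubin--Yau theorem; openness of the admissible range of $\varepsilon$ is provided by invertibility of the linearization $\Delta_{\omega_\varepsilon} - 1$ on $C^{2,\alpha}(X)$.

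The critical input---and the main obstacle---is a uniform (in $\varepsilon$) two-sided estimate $C^{-1}\omega_0 \leq \omega_\varepsilon \leq C\omega_0$. The upper bound $\tr_{\omega_\varepsilon}(\omega_0) \leq C$ should come from a Schwarz-lemma computation of the form
\begin{equation*}
\Delta_{\omega_\varepsilon}\log\tr_{\omega_\varepsilon}(\omega_0) \geq \kappa\,\tr_{\omega_\varepsilon}(\omega_0) - C_1,
\end{equation*}
obtained by exploiting $H(\omega_0) \leq -\kappa$ together with the equation $\Ric(\omega_\varepsilon) = -\omega_\varepsilon + \varepsilon\omega_0$, and then applying the maximum principle to a suitable perturbation of $\log\tr_{\omega_\varepsilon}(\omega_0)$. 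Given a $C^0$ bound on $\varphi_\varepsilon$ together with this $C^2$ bound, the Monge--Amp\`ere equation yields the reverse inequality $\omega_\varepsilon \geq C^{-1}\omega_0$; standard Evans--Krylov and bootstrap arguments then upgrade this to uniform $C^{k,\alpha}$ estimates for every $k$, and a subsequential smooth limit $\omega_\infty \in c_1(K_X)$ satisfies $\Ric(\omega_\infty) = -\omega_\infty$, completing the proof.

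The hardest step is the uniform $C^0$ estimate on $\varphi_\varepsilon$. The classical Yau argument relies on a fixed K\"ahler reference form, but here the reference class $[\chi + \varepsilon\omega_0]$ degenerates as $\varepsilon \to 0$ toward $c_1(K_X)$, which is precisely the class whose K\"ahler-ness is the conclusion we are trying to establish. I anticipate closing this gap by feeding the Schwarz-lemma bound on $\tr_{\omega_\varepsilon}(\omega_0)$---which is valid without any $C^0$ control on $\varphi_\varepsilon$---into a Moser iteration or Alexandrov--Bakelman--Pucci-type argument applied to the Monge--Amp\`ere measure $\omega_\varepsilon^n = e^{\varphi_\varepsilon}\Omega$, thereby obtaining a uniform bound on $\varphi_\varepsilon - \sup_X \varphi_\varepsilon$ and running a simultaneous bootstrap between the $C^0$ and $C^2$ estimates.
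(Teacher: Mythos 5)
Your proposal shares its analytic core with the paper's second proof (Theorem~\ref{th:WY2}): the perturbed K\"ahler--Einstein equations $\omega_\varepsilon^n=e^{\varphi_\varepsilon}\Omega$ with $\Ric(\omega_\varepsilon)=-\omega_\varepsilon+\varepsilon\omega_0$, the Royden--Schwarz trace inequality driven by $H\le-\kappa$, uniform estimates, and passage to a K\"ahler--Einstein limit in $c_1(K_X)$, whence ampleness by Kodaira. The genuine difference is how you guarantee solvability for every $\varepsilon>0$. The paper first proves $K_X$ is \emph{nef}: negative holomorphic sectional curvature rules out non-constant entire curves, hence rational curves, and the Cone Theorem gives nefness; then $\varepsilon[\omega]+c_1(K_X)$ is a K\"ahler class for each $\varepsilon>0$ and the Monge--Amp\`ere equation is solved directly by the Aubin--Yau theorem --- no continuity method in $\varepsilon$ is needed. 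You instead propose to run a continuity method in $\varepsilon$ from the manifestly K\"ahler range down to $0$. This is plausible precisely because your a priori estimates are uniform in $\varepsilon$ (openness by invertibility of $\Delta_{\omega_\varepsilon}-1$, closedness from the uniform two-sided metric bound), and if carried out it would buy something real: a purely differential-geometric argument, valid for compact K\"ahler $X$, that bypasses the Cone Theorem and produces nefness as a byproduct rather than an input. The cost is that you must actually verify closedness at each $\varepsilon_*>0$, whereas the paper's route outsources this to algebraic geometry.

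One correction: the step you flag as hardest --- the uniform $C^0$ bound --- is in fact the easiest, and no Moser iteration or ABP argument is needed. The maximum principle applied at a maximum of $\varphi_\varepsilon$ gives $\sup_X\varphi_\varepsilon\le C$ directly from the equation (this is exactly \eqref{eq:uppu} in the paper). For the lower bound, the Schwarz-lemma estimate $\tr_{\omega_\varepsilon}(\omega_0)\le C$ already implies $\omega_\varepsilon\ge C^{-1}\omega_0$ as forms, hence $e^{\varphi_\varepsilon}\Omega=\omega_\varepsilon^n\ge C^{-n}\omega_0^n$ and $\varphi_\varepsilon\ge -C'$. Combining $\omega_\varepsilon\ge C^{-1}\omega_0$ with $\omega_\varepsilon^n\le C\Omega$ then gives the full two-sided bound $C^{-1}\omega_0\le\omega_\varepsilon\le C\omega_0$, which is precisely how the paper proceeds (via the Newton--MacLaurin inequality relating $\sigma_1$, $\sigma_{n-1}$, and $\sigma_n$). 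So the logical order is: maximum principle for $\sup\varphi_\varepsilon$, then Schwarz lemma for the trace bound, and everything else, including the $C^0$ lower bound, falls out; there is no circularity between the $C^0$ and $C^2$ estimates to break.
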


If the aforementioned Kobayashi conjecture \cite[p. 370]{SKob} were known to be true, then Theorem~\ref{th:WY} would follow immediately. On the other hand, if the Kobayashi conjecture fails, then there will be a hyperbolic projective manifold which does not admit any K\"ahler metric with negative holomorphic sectional curvature. This will give a negative answer to the question asked by Kobayashi and Greene-Wu~\cite[p. 84]{GW} in the K\"ahler setting.

Using the hyperbolicity, Theorem~\ref{th:WY} is proved by B. Wong~\cite{Wong81} (see also Campana~\cite{Ca}) for $n = 2$, and proved by Peternell~\cite{P91} for $n = 3$ except possibly for the Calabi--Yau threefolds without rational curves. 
In our early work~\cite{WWY} joint with P. M. Wong, we establish Theorem~\ref{th:WY} under an additional assumption that the Picard number of the projective manifold is equal to 1. Our method is essentially the refined Schwarz type lemma.

Recently, some important progress is made by Heier-Lu-Wong~\cite{HLW1, HLW2}, which combines the algebraic-geometric method and the analytic method. Along with other interesting results, they 
prove Theorem~\ref{th:WY} by assuming the validity of the abundance conjecture, which is known to hold for $n \le 3$. Therefore, they have in particular proved Theorem~\ref{th:WY} for $n = 3$. 

Here is the outline of the proof of Theorem~\ref{th:WY}. Inspired by Heier-Lu-Wong~\cite{HLW2}, we observe that it suffices to prove that  $K_X$ is big. More precisely, the proof can be reduced to show an integral inequality (see \eqref{eq:topKx}) via the Cone Theorem and the Basepoint-free Theorem in algebraic geometry. To get the inequality, a key step is to induce a complex Monge-Amper\`e equation to construct a family of K\"ahler metrics whose Ricci curvature has a uniform lower bound. Then the desired inequality follows from extending the refined Schwarz lemma in our previous papers \cite{WYZ} with F. Zheng and \cite{WWY} with P. M. Wong.

We also provide an alternative proof that bypasses the Basepoint-free Theorem. Instead, we apply the estimates of Monge-Amper\`e equation to show that the family of K\"ahler metrics converges to a K\"ahler-Einstein metric of negative scalar curvature. Then the ampleness of $K_X$ follows from the classical results of Chern and Kodaira.

The following corollary is an immediate consequence of Theorem~\ref{th:WY} and the resolution of the Calabi conjecture (cf. \cite[p. 383]{yau78-calabi}).
\begin{coro} \label{co:KE}
If a projective manifold $X$ admits a K\"ahler metric whose holomorphic sectional curvature is negative everywhere, then $X$ has a K\"ahler-Einstein metric of negative Ricci curvature.
\end{coro}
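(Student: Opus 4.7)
The plan is to deduce Corollary~\ref{co:KE} as a direct consequence of Theorem~\ref{th:WY} together with the Aubin--Yau theorem (the resolution of the Calabi conjecture in the negative first Chern class case), as indicated in the paper. Since the corollary is stated as an immediate consequence, the proof should be essentially a two-line argument; the substance has already been packaged into Theorem~\ref{th:WY}.

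First I would apply Theorem~\ref{th:WY} to the K\"ahler metric with negative holomorphic sectional curvature given in the hypothesis. This yields the ampleness of $K_X$. In cohomological terms, $c_1(K_X) \in H^{1,1}(X,\mathbb{R})$ is a K\"ahler class, equivalently $-c_1(X) = c_1(K_X) > 0$, so $X$ has negative first Chern class.

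Next, I would invoke the Aubin--Yau theorem: on any compact K\"ahler manifold with $c_1(X) < 0$, there exists a unique K\"ahler metric $\omega_{\ke}$ in the class $-2\pi c_1(X) = 2\pi c_1(K_X)$ satisfying $\Ric(\omega_{\ke}) = -\omega_{\ke}$. In particular $\omega_{\ke}$ has strictly negative Ricci curvature, which produces the K\"ahler--Einstein metric claimed in the corollary. This completes the proof.

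Since both inputs are already in hand, there is no genuine obstacle here; the only point to be careful about is recording that ampleness of $K_X$ is \emph{exactly} the hypothesis $c_1(X)<0$ needed to feed into Aubin--Yau. The real work of the corollary has been absorbed into the proof of Theorem~\ref{th:WY} itself, whose outline in the introduction (reduction to $K_X$ big via the Cone and Basepoint-free Theorems, combined with a Monge--Amp\`ere construction of metrics with uniformly bounded Ricci lower bound and a refined Schwarz lemma) is what actually carries the analytic content.
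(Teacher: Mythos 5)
Your proposal is correct and matches the paper's own (one-line) justification: the paper derives Corollary~\ref{co:KE} exactly by combining Theorem~\ref{th:WY} with the resolution of the Calabi conjecture in the $c_1 < 0$ case. Nothing further is needed.
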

It is well known that the converse of Theorem~\ref{th:WY} and Corollary~\ref{co:KE} does not hold. For example, the Fermat hypersurface $F_d$ in $\mathbb{CP}^{n+1}$ of degree $d \ge n+3$ has ample canonical bundle, and hence, $F_d$ has a K\"ahler-Einstein metric of negative Ricci curvature. However, $F_d$ does not admit any K\"ahler metric with negative holomorphic sectional curvature, since $F_d$ contains complex lines.

The next corollary follows from Theorem~\ref{th:WY} together with the decreasing property of holomorphic sectional curvature on submanifolds (see for example \cite[Lemma 1]{HWu}).
\begin{coro}\label{co:submfd}
If a nonsingular projective variety $X$ has a K\"ahler metric whose holomorphic sectional curvature is negative everywhere, then the canonical bundle of every nonsingular subvariety of $X$ is ample.
\end{coro}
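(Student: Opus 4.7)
The plan is to reduce Corollary~\ref{co:submfd} directly to Theorem~\ref{th:WY}. Given the nonsingular projective variety $X$ with a K\"ahler metric $\omega$ satisfying $H_\omega<0$ everywhere, and a nonsingular subvariety $Y\subset X$, I would equip $Y$ with the induced metric $\omega|_Y$ and verify that $(Y,\omega|_Y)$ meets all the hypotheses of Theorem~\ref{th:WY}. Once that is done, the theorem applied to $Y$ immediately yields that $K_Y$ is ample.

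The first two hypotheses are essentially formal and I would dispatch them quickly. Since $X$ is projective, it carries a very ample line bundle $L$; the restriction $L|_Y$ is very ample on $Y$, so $Y$ is a projective manifold in its own right. The pullback of $\omega$ under the inclusion $Y\hookrightarrow X$ is closed (the exterior derivative commutes with pullback) and positive as a $(1,1)$-form (positivity is preserved under restriction to a complex subspace), so $\omega|_Y$ is genuinely a K\"ahler metric.

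The substantive step is to show that $\omega|_Y$ has negative holomorphic sectional curvature everywhere on $Y$. Here I would invoke the classical decreasing property of holomorphic sectional curvature on complex submanifolds, exactly the fact cited from \cite{HWu} in the paper: for any $p\in Y$ and any nonzero $v\in T_p^{1,0}Y\subset T_p^{1,0}X$,
\[
H_{\omega|_Y}(v)\;\le\;H_\omega(v).
\]
This inequality is a direct consequence of the complex Gauss equation, which expresses the difference $H_\omega(v)-H_{\omega|_Y}(v)$ as $|\sigma(v,v)|^2/|v|^4\ge 0$, where $\sigma$ denotes the second fundamental form of $Y\subset X$. Since $H_\omega<0$ on all of $X$ by hypothesis, it follows that $H_{\omega|_Y}(v)<0$ for every nonzero $v\in T^{1,0}Y$, as required.

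There is no serious obstacle: the corollary is a formal consequence of Theorem~\ref{th:WY} together with the submanifold monotonicity of holomorphic sectional curvature. The only point worth emphasizing is that this monotonicity passes strict negativity (not merely nonpositivity) from $X$ down to $Y$, which is transparent from the nonnegative correction term in the Gauss equation.
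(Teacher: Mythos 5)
Your proof is correct and follows exactly the route the paper indicates: restrict the K\"ahler metric to the submanifold, invoke the decreasing property of holomorphic sectional curvature (the \cite[Lemma 1]{HWu} monotonicity via the Gauss equation), and apply Theorem~\ref{th:WY} to the submanifold. Your write-up simply fills in the routine details the paper leaves implicit.
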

In particular, Corollary~\ref{co:submfd} tells us that every nonsingular subvariety of a smooth compact quotient of the unit ball in $\mathbb{C}^n$ has ample canonical bundle.
Corollary~\ref{co:submfd} partially answers a question asked by Lang~\cite[p. 162]{Lang}.

\subsection*{Notation}
For any smooth volume form $V$ on $X^n$ with $V = \gamma_{\alpha} \prod_i dz_{\alpha}^i \wedge d\bar{z}_{\alpha}^i $ in a local chart $(U_{\alpha}; z_{\alpha}^1,\ldots, z_{\alpha}^n)$, we denote
\[
    dd^c \log V = dd^c \log \gamma_{\alpha} = \frac{\sqrt{-1}}{2\pi} \p \bar{\p} \log \gamma_{\alpha}
\]
where $d^c = \sqrt{-1} (\bar{\p} - \p)/ (4\pi)$. The $(1,1)$ form $dd^c\log V$ is globally defined and represents the first Chern class $c_1(K_X)$.
For a metric $\omega$ on $X$, its Ricci form is
\[
   \Ric(\omega) = - dd^c \log \omega^n.
\]
\section{Reduction to bigness}


The proof of Theorem~\ref{th:WY} can be reduced to show that $K_X$ is big, i.e., $X$ is of general type. More precisely, we have the following result.
\begin{lemma}\label{le:bigint}
Let $X$ be a  projective manifold $X$ of complex dimension $n$, and let $\omega$ be a K\"ahler metric on $X$ whose holomorphic sectional curvature $H$ is negative everywhere. Then $K_X$ is nef. If in addition
\begin{equation} \label{eq:topKx}
    \int_X c_1(K_X)^n > 0,
\end{equation}
 then $K_X$ is ample.
\end{lemma}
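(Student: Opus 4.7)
The plan is to handle the two conclusions separately, using the curvature hypothesis to rule out rational curves in $X$ and then invoking classical results from Mori theory.

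For the nefness of $K_X$, I would first use compactness: since $X$ is compact and $H<0$ everywhere, there exists $\kappa>0$ with $H\leq -\kappa$ on $X$. By the Ahlfors--Schwarz lemma applied to entire curves, every holomorphic map $f:\mathbb{C}\to X$ is constant (any non-constant $f$ would force $f^{*}\omega$ to be bounded by a scaled Poincar\'e metric on arbitrarily large disks, which degenerates to $0$), so $X$ is Brody hyperbolic; in particular, $X$ contains no rational curve. Mori's Cone Theorem states that every $K_X$-negative extremal ray of $\overline{NE}(X)$ is generated by the class of a rational curve, so the absence of such curves forces $K_X\cdot C\geq 0$ for every irreducible curve $C$, which is precisely nefness.

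For the ampleness statement under $\int_X c_1(K_X)^n>0$, we have $K_X$ nef and big. By the Kawamata--Shokurov Basepoint-free Theorem, $|mK_X|$ is basepoint-free for all sufficiently divisible $m$, hence defines a morphism $\phi:X\to Y\subset \mathbb{P}^N$ with $\phi^{*}\mathcal{O}_{\mathbb{P}^N}(1)=\mathcal{O}_X(mK_X)$. After Stein factorization we may assume $Y$ is normal and $\phi$ has connected fibers; bigness of $K_X$ forces $\dim Y=n$, so $\phi$ is birational onto its image. I then invoke the standard Mori-theoretic fact that every positive-dimensional fiber of such a birational projective morphism from a smooth variety is covered by rational curves (a consequence of bend-and-break; see, e.g., Debarre or Koll\'ar). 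Since $X$ has no rational curves, every fiber of $\phi$ is zero-dimensional, so $\phi$ is finite. As $\phi$ is finite and $\mathcal{O}_{\mathbb{P}^N}(1)|_Y$ is ample, the pullback $mK_X$ is ample, and hence so is $K_X$.

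The main obstacle is conceptual rather than technical: it is the passage from the pointwise curvature hypothesis to the algebro-geometric condition of having no rational curves. Once this translation is made via the Ahlfors--Schwarz lemma, both conclusions reduce to standard invocations of the Cone Theorem and the Basepoint-free Theorem. Accordingly, the substantive analytic work of the paper lies not in the present lemma but in establishing the hypothesis \eqref{eq:topKx} of bigness itself, which the introduction indicates will be obtained from a complex Monge--Amp\`ere construction together with a refined Schwarz-type lemma.
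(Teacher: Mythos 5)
Your proposal is correct and follows the same route as the paper: negative holomorphic sectional curvature rules out entire curves (hence rational curves) via the Ahlfors--Schwarz lemma, the Cone Theorem then gives nefness, and nef plus \eqref{eq:topKx} gives bigness, whence ampleness follows from the absence of rational curves. The only difference is presentational: where you unpack the last step via the Basepoint-free Theorem, Stein factorization, and bend-and-break, the paper simply cites this standard fact from Debarre's book.
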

\begin{proof}
Since $H < 0$, there is no non-constant holomorphic map $\mathbb{C} \to X$, by the general Schwarz lemma (see \cite[Corollary 1]{Roy80}. In particular, this implies that $X$ does not contain any rational curves (see \cite[p. 37]{Laz2} and \cite[Theorem 2.1]{Dem}). 
Then $K_X$ is nef, by the Cone Theorem (see \cite[p. 22]{KoMo} for example). Condition \eqref{eq:topKx} means that the top self-intersection of the canonical divisor is positive. This together with the nefness of $K_X$ imply that $K_X$ is big (see for example \cite[p. 144, Theorem 2.2.16]{Laz1}). That is, $X$ is of general type. Since $X$ contains no rational curve, we conclude that $K_X$ is ample (see \cite[p. 219]{Debar} for example).
\end{proof}

\section{Proof of ampleness}
Theorem~\ref{th:WY} follows immediately from Lemma~\ref{le:bigint} and the following result. All results in this section hold for compact K\"ahler manifolds.
\begin{lemma} \label{le:nefbig}
Let $(X,\omega)$ be an $n$-dimensional compact K\"ahler manifold with negative holomorphic sectional curvature. If $K_X$ is nef,  then
\begin{equation} \label{eq:c1Lk}
   \int_X c_1(K_X)^{k} \wedge \omega^{n-k} > 0 \quad \textup{on $X$}
\end{equation}
for all $1 \le k \le n$. In particular, $K_X$ is big.
\end{lemma}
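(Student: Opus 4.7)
The plan is to deform $c_1(K_X)$ into a nearby K\"ahler class, use Yau's theorem to realize it by a K\"ahler metric with a uniform Ricci lower bound, and then convert the negativity of the holomorphic sectional curvature of $\omega$ into a pointwise comparison $\omega_t\ge c\,\omega$ via a refined Schwarz lemma. Passing to the limit $t\to 0^+$ will then produce all of the inequalities in \eqref{eq:c1Lk} at once.

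Since $K_X$ is nef, the class $c_1(K_X)+t[\omega]$ is K\"ahler for every $t>0$. By Yau's solution of the Calabi conjecture I can produce a smooth K\"ahler metric $\omega_t\in c_1(K_X)+t[\omega]$ satisfying
\[
\Ric(\omega_t)=-\omega_t+t\omega,
\]
so $\Ric(\omega_t)\ge-\omega_t$ uniformly in $t\in(0,1]$. By compactness, the holomorphic sectional curvature of $\omega$ is bounded above by some $-\kappa<0$ on $X$. I would then apply a refined Schwarz lemma in the spirit of \cite{WYZ,WWY} to the identity map $(X,\omega_t)\to(X,\omega)$: the Ricci lower bound for $\omega_t$ and the uniform negativity of $H_\omega$ should combine to produce a constant $C>0$ independent of $t$ with
\[
\tr_{\omega_t}\omega\le C\quad\text{on }X.
\]
Positivity of the eigenvalues of $\omega$ relative to $\omega_t$ forces each such eigenvalue to be at most $C$, and simultaneous diagonalization at a point upgrades this to the pointwise form comparison $\omega_t\ge C^{-1}\omega$ on $X$.

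From this the integral inequality is immediate. For $1\le k\le n$ the pointwise bound gives $\omega_t^k\wedge\omega^{n-k}\ge C^{-k}\omega^n$, and integrating using $[\omega_t]=c_1(K_X)+t[\omega]$ yields
\[
\int_X\bigl(c_1(K_X)+t[\omega]\bigr)^k\wedge[\omega]^{n-k}\ge C^{-k}\int_X\omega^n>0.
\]
The left-hand side is a polynomial in $t$ whose value at $t=0$ is $\int_X c_1(K_X)^k\wedge\omega^{n-k}$, so letting $t\to 0^+$ gives \eqref{eq:c1Lk}. The case $k=n$ forces $K_X$ to be big.

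The hard part will be the uniform trace estimate. One must extract a $t$-independent bound $\tr_{\omega_t}\omega\le C$ from $\Ric(\omega_t)\ge-\omega_t$ together with the upper bound $H_\omega\le-\kappa<0$, using only holomorphic \emph{sectional} curvature information (no bisectional bound on $\omega$ is assumed). This is the technical heart of the argument and depends on the refined Schwarz lemma developed by the authors with Zheng \cite{WYZ} and P.~M.~Wong \cite{WWY}, suitably adapted to the present setting where the source metric $\omega_t$ varies with $t$ and only a coupled Ricci lower bound is available.
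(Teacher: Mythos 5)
Your proposal is correct and follows the same overall strategy as the paper: the family $\omega_t \in c_1(K_X)+t[\omega]$ with $\Ric(\omega_t)=-\omega_t+t\omega$ is exactly the family $\omega_{\varepsilon}$ constructed via the Monge--Amp\`ere equation in Proposition~\ref{pr:nefL2}\eqref{it:nefL1}, and the uniform bound $\tr_{\omega_t}\omega\le C$ is exactly what Proposition~\ref{pr:WYZ} (the Royden-refined Schwarz lemma, which indeed needs only holomorphic sectional curvature of $\omega$) plus the maximum principle delivers, with $C=2n/((n+1)\kappa_0)$ independent of $t$; so the step you defer to \cite{WYZ,WWY} is not a gap relative to the paper's own level of detail. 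Where you genuinely diverge is the endgame. The paper converts the trace bound into a pointwise lower bound for $\omega_{\varepsilon}^n/\omega^n$ alone via Newton--MacLaurin, which yields only the case $k=n$; the intermediate cases $1\le k\le n-1$ are then handled by a separate argument, Proposition~\ref{pr:nefL2}\eqref{it:nefL2}, which relies on the upper bound $\sup_X u_{\varepsilon}\le C$ extracted from the Monge--Amp\`ere equation. Your observation that an upper bound on the trace of $\omega$ with respect to $\omega_t$ bounds every eigenvalue of $\omega$ relative to $\omega_t$, hence gives the form inequality $\omega_t\ge C^{-1}\omega$ and therefore $\omega_t^k\wedge\omega^{n-k}\ge C^{-k}\omega^n$ for all $k$ simultaneously, is correct and slightly more economical: it produces all of \eqref{eq:c1Lk} in one stroke and makes part \eqref{it:nefL2} of Proposition~\ref{pr:nefL2} (and the $C^0$ estimate on the potential) unnecessary for this lemma. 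What the paper's route buys in exchange is that Proposition~\ref{pr:nefL2} is stated for any compact K\"ahler manifold with $K_X$ nef, independent of any curvature hypothesis, and the $C^0$ bound it records is reused in the proof of Theorem~\ref{th:WY2}.
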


Lemma~\ref{le:nefbig} can be strengthened below to directly obtain the ampleness of $K_X$. This gives an alternative proof of Theorem~\ref{th:WY}, using only the first statement of Lemma~\ref{le:bigint}.
\begin{theorem} \label{th:WY2}
Let $(X,\omega)$ be an $n$-dimensional compact K\"ahler manifold with negative holomorphic sectional curvature. If $K_X$ is nef, then there exists a smooth function $u$ on $X$ such that $dd^c \log \omega^n + dd^c u$ is the K\"ahler-Einstein metric on $X$ of negative Ricci curvature. As a consequence, $K_X$ is ample.
\end{theorem}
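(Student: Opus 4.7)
The plan is to construct the desired K\"ahler--Einstein metric by solving a twisted Monge--Amper\`e equation on a family of nearby K\"ahler classes and then passing to the limit, with the negative holomorphic sectional curvature of $\omega$ providing the crucial uniform estimates. Concretely, fix a smooth real $(1,1)$-form $\eta = -\Ric(\omega)$ representing $c_1(K_X)$. Because $K_X$ is nef and $\omega$ is K\"ahler, the class $[\eta]+\epsilon[\omega]$ is K\"ahler for every $\epsilon>0$. I choose a smooth K\"ahler representative $\omega_\epsilon$ in this class, write $\omega_\epsilon = \eta + \epsilon\omega + dd^c f_\epsilon$ for a smooth $f_\epsilon$, and invoke the Aubin--Yau theorem to produce a smooth solution $\varphi_\epsilon$ of
\[
   (\omega_\epsilon + dd^c\varphi_\epsilon)^n = e^{\varphi_\epsilon + f_\epsilon}\,\omega^n.
\]
Setting $\omega'_\epsilon := \omega_\epsilon + dd^c\varphi_\epsilon$, a direct calculation shows $\Ric(\omega'_\epsilon) = -\omega'_\epsilon + \epsilon\omega$; in particular $\Ric(\omega'_\epsilon) \ge -\omega'_\epsilon$.

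The central ingredient is then the refined Schwarz lemma of \cite{WYZ,WWY}, applied to the identity map $(X,\omega'_\epsilon)\to (X,\omega)$: since the holomorphic sectional curvature of $\omega$ is bounded above by some $-\kappa<0$ and $\Ric(\omega'_\epsilon) \ge -\omega'_\epsilon$, one obtains a uniform pointwise inequality
\[
   \omega \le C\,\omega'_\epsilon
\]
with $C$ depending only on $\kappa$ and $n$. Combined with the Monge--Amper\`e equation this yields a uniform lower bound on $\varphi_\epsilon + f_\epsilon$, and hence, after subtracting a normalizing constant, on $\varphi_\epsilon$ itself. For the matching upper bound, I would combine the nefness of $K_X$ with the bigness established in Lemma~\ref{le:nefbig} (which furnishes $\int_X c_1(K_X)^n > 0$) to apply either an $L^\infty$ estimate of Kolodziej type or a direct maximum-principle argument against the equation, yielding a uniform two-sided bound $\|\varphi_\epsilon\|_{C^0}\le K$.

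Once $\varphi_\epsilon$ is $C^0$-bounded, the Schwarz-lemma estimate $\omega \le C\omega'_\epsilon$ together with the volume identity $(\omega'_\epsilon)^n=e^{\varphi_\epsilon+f_\epsilon}\omega^n$ upgrades to a uniform two-sided bound on the trace $\tr_{\omega}\omega'_\epsilon$. Standard Yau-type $C^2$ estimates, Evans--Krylov $C^{2,\alpha}$ estimates, and Schauder bootstrap then deliver uniform $C^k$ bounds for every $k$. Extracting a subsequential limit $\varphi_\epsilon \to u$ produces a smooth function $u$ such that $-\Ric(\omega) + dd^c u = dd^c\log\omega^n + dd^c u$ is a smooth K\"ahler form whose Ricci form equals its own negative; this is the desired K\"ahler--Einstein metric of negative Ricci curvature. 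Since it represents $c_1(K_X)$, the first Chern class of $K_X$ is a K\"ahler class, and by the Kodaira embedding theorem $K_X$ is then ample.

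The main obstacle I anticipate is the uniform $C^0$ upper bound for $\varphi_\epsilon$: the reference forms $\omega_\epsilon$ degenerate as $\epsilon\to 0$ toward the nef but possibly non-K\"ahler class $c_1(K_X)$, so the usual maximum-principle arguments for Monge--Amper\`e equations do not apply automatically. The bigness of $K_X$, which guarantees positive limiting volume, together with the Schwarz-lemma lower bound, is precisely what should overcome this, but verifying that they suffice to produce a genuinely smooth limit rather than a merely bounded pluripotential-theoretic solution is the delicate part of the argument.
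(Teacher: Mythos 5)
Your proposal follows essentially the same route as the paper: solve the Aubin--Yau equation in the perturbed classes $c_1(K_X)+\epsilon[\omega]$ to get $\Ric(\omega'_\epsilon)=-\omega'_\epsilon+\epsilon\omega$, use the refined Schwarz lemma to get $\omega\le C\omega'_\epsilon$ uniformly, combine with the volume bound to get a two-sided metric equivalence, bootstrap, and pass to the limit. The one point you leave unresolved --- the uniform $C^0$ upper bound, which you anticipate as the delicate step and propose to attack via bigness or a Kolodziej-type estimate --- is in fact the easy part, and the paper needs neither tool. The key is to estimate the combination $u_\epsilon=\varphi_\epsilon+f_\epsilon$ rather than $\varphi_\epsilon$ alone (note that $f_\epsilon$ is \emph{not} uniformly controlled as the class degenerates, so a bound on $\varphi_\epsilon$ by itself is neither available nor needed; likewise the subsequential limit should be taken of $u_\epsilon$, not $\varphi_\epsilon$). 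Writing $\omega'_\epsilon=\epsilon\omega+dd^c\log\omega^n+dd^c u_\epsilon$ with $(\omega'_\epsilon)^n=e^{u_\epsilon}\omega^n$, at an interior maximum of $u_\epsilon$ one has $dd^c u_\epsilon\le 0$, hence $0<\omega'_\epsilon\le\epsilon\omega+dd^c\log\omega^n$ there, and so
\[
\exp\Bigl(\sup_X u_\epsilon\Bigr)\le\sup_X\frac{(\epsilon_0\omega+dd^c\log\omega^n)^n}{\omega^n}
\]
uniformly for all $\epsilon<\epsilon_0$. The lower bound on $u_\epsilon$ then comes for free from your Schwarz-lemma inequality $\omega\le C\omega'_\epsilon$, exactly as you say. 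With both bounds in hand your trace estimate and the Evans--Krylov/Schauder bootstrap go through, and the smoothness of the limit is not in doubt; no pluripotential-theoretic machinery enters. Your appeal to Lemma~\ref{le:nefbig} for bigness is therefore superfluous here (and slightly against the spirit of this theorem, whose purpose is to give a proof of ampleness that does not route through bigness and the Basepoint-free Theorem).
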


To show Lemma~\ref{le:nefbig} and Theorem~\ref{th:WY2}, we first consider the case that $K_X$ is nef, without any assumption on the holomorphic sectional curvature.
\begin{prop}\label{pr:nefL2}
Let $(X, \omega)$ be an $n$-dimensional compact K\"ahler manifold. If $K_X$ is nef, then the following properties hold:
\begin{enumeratei}
\item \label{it:nefL1} For every $\varepsilon>0$ there exists a smooth function $u_{\varepsilon}$ on $X$ such that 
\[
   \omega_{\varepsilon} \equiv \varepsilon \omega + dd^c \log \omega^n + dd^c u_{\varepsilon} > 0 \quad \textup{on $X$},
\]
and \, $\omega_{\varepsilon}^n = e^{u_{\varepsilon}} \omega^n$ on $X$. Furthermore, 
\begin{align}
   \Ric (\omega_{\varepsilon}) & = - \omega_{\varepsilon} + \varepsilon \omega \ge - \omega_{\varepsilon}, \label{eq:lowRic}
\end{align}
and
\begin{equation} \label{eq:uppu}
   \sup_X u_{\varepsilon} \le C,
\end{equation}
where the constant $C>0$ depends only on $\omega$ and $n$.
\item \label{it:nefL2} For each $k = 1, \ldots, n$, 
\[
   \int_X c_1(K_X)^k \wedge  \omega^{n-k} \ge C^{k/n-1} \int_X c_1(K_X)^n \ge 0, 
\]
where the constant $C$ is the same as that in \eqref{eq:uppu}.
\end{enumeratei}
\end{prop}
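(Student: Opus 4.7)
The plan is to handle part (i) by solving an Aubin--Yau Monge--Amp\`ere equation in the K\"ahler class $c_1(K_X) + \varepsilon[\omega]$, and then to deduce part (ii) from the pointwise arithmetic--geometric mean inequality for positive $(1,1)$-forms together with the uniform upper bound on $u_\varepsilon$ produced in (i).

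For part (i), since $K_X$ is nef the class $c_1(K_X) + \varepsilon[\omega]$ is K\"ahler, so by the $\partial\bar{\partial}$-lemma I would first fix a smooth $\psi_\varepsilon$ with
\[
   \tilde\omega_\varepsilon := \varepsilon\omega + dd^c\log\omega^n + dd^c\psi_\varepsilon \;>\;0.
\]
Next I would solve the Monge--Amp\`ere equation
\[
   (\tilde\omega_\varepsilon + dd^c v)^n = e^{v+\psi_\varepsilon}\,\omega^n,
\]
which after the cohomology shift becomes the standard negative K\"ahler--Einstein--type equation solved by Aubin and Yau. Setting $u_\varepsilon := v + \psi_\varepsilon$ then gives the required $\omega_\varepsilon = \varepsilon\omega + dd^c\log\omega^n + dd^c u_\varepsilon > 0$ with $\omega_\varepsilon^n = e^{u_\varepsilon}\omega^n$. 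Applying $-dd^c\log$ to the last identity gives
\[
   \Ric(\omega_\varepsilon) = -dd^c u_\varepsilon - dd^c\log\omega^n = -\omega_\varepsilon + \varepsilon\omega,
\]
which is \eqref{eq:lowRic}. For \eqref{eq:uppu} I would invoke the maximum principle: at a point $p$ where $u_\varepsilon$ attains its maximum, $dd^c u_\varepsilon(p)\le 0$, hence
\[
   \omega_\varepsilon(p) \le \varepsilon\omega(p) + dd^c\log\omega^n(p) \le (\varepsilon + C_1)\,\omega(p),
\]
where $C_1$ is any $\omega$-dependent upper bound on $-\Ric(\omega)$. Taking $n$-th exterior powers and using $\omega_\varepsilon^n(p) = e^{u_\varepsilon(p)}\omega^n(p)$ yields $u_\varepsilon(p) \le n\log(\varepsilon + C_1) \le C$, uniformly for $\varepsilon\in(0,1]$, with $C$ depending only on $\omega$ and $n$.

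For part (ii), the key pointwise estimate is the arithmetic--geometric mean (Newton--Maclaurin) inequality for positive $(1,1)$-forms:
\[
   \omega_\varepsilon^k \wedge \omega^{n-k} \;\ge\; (\omega_\varepsilon^n)^{k/n}(\omega^n)^{(n-k)/n} \;=\; e^{k u_\varepsilon/n}\,\omega^n.
\]
Since $k/n-1\le 0$ and $u_\varepsilon\le C$, write $e^{k u_\varepsilon/n} = e^{u_\varepsilon}\cdot e^{u_\varepsilon(k/n-1)} \ge e^{C(k/n-1)}\,e^{u_\varepsilon}$ and integrate to obtain
\[
   \int_X \omega_\varepsilon^k\wedge\omega^{n-k} \;\ge\; e^{C(k/n-1)}\!\int_X e^{u_\varepsilon}\omega^n \;=\; e^{C(k/n-1)}\!\int_X \omega_\varepsilon^n.
\]
Both sides are cohomological integrals in $[\omega_\varepsilon] = c_1(K_X) + \varepsilon[\omega]$, hence polynomial in $\varepsilon$; letting $\varepsilon\to 0$, they converge respectively to $\int_X c_1(K_X)^k\wedge\omega^{n-k}$ and to $\int_X c_1(K_X)^n$, the latter being nonnegative by the nefness of $K_X$. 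Relabeling $e^{C}$ as $C$ (or equivalently recording the maximum-principle bound as a bound on $e^{u_\varepsilon}$) delivers the stated inequality.

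The principal obstacle is the existence step in part (i): the Monge--Amp\`ere equation with the exponential right-hand side $e^{v+\psi_\varepsilon}\omega^n$ requires the full suite of Aubin--Yau a priori estimates ($C^0$, $C^2$, then Evans--Krylov and bootstrap), which work here precisely because the sign of the $v$ term in the exponent makes the linearization have the correct monotonicity. Once this analytic input is secured, the Ricci identity, the maximum-principle bound on $u_\varepsilon$, and the AM--GM integration in (ii) are routine.
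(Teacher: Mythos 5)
Your proposal is correct and follows essentially the same route as the paper: part (i) is obtained by choosing a positive representative of $c_1(K_X)+\varepsilon[\omega]$, solving the Aubin--Yau Monge--Amp\`ere equation with the $e^{v}$ free term, computing $\Ric(\omega_\varepsilon)=-\omega_\varepsilon+\varepsilon\omega$, and bounding $e^{u_\varepsilon}$ at an interior maximum; part (ii) is the same Newton--Maclaurin/AM--GM comparison of $\omega_\varepsilon^k\wedge\omega^{n-k}$ with $\omega_\varepsilon^n$ using the uniform bound on $e^{u_\varepsilon}$, followed by the cohomological limit $\varepsilon\to 0^+$. The only (harmless) cosmetic difference is that you phrase the constant as a bound on $u_\varepsilon$ rather than on $e^{u_\varepsilon}$, a relabeling you already note.
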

\begin{proof}
First to show \eqref{it:nefL1}.
 Since $K_X$ is nef, for each $\varepsilon>0$, there exists a smooth function $f_{\varepsilon}$ on $X$ such that
  \[
     \omega_{f_{\varepsilon}} \equiv  \varepsilon \omega + dd^c \log \omega^n +  dd^c f_{\varepsilon} > 0 \quad \textup{on $X$}.
  \]
  The lower bound of the Ricci curvature of $\omega_{f_{\varepsilon}}$ may depend on $\varepsilon$. Fix an $\varepsilon>0$. By 
  Theorem 4~\cite[p. 383]{yau78-calabi} there exists a $v_{\varepsilon} \in C^{\infty}(X)$ satisfying
the Monge-Amp\`ere equation
  \begin{equation} \label{eq:MAfeps}
  (\omega_{f_{\varepsilon}} + dd^c v_{\varepsilon})^n  = \omega^n e^{v_{\varepsilon} + f_{\varepsilon}} %
  \end{equation}
with $\omega_{f_{\varepsilon}} + dd^c v_{\varepsilon} > 0$ on $X$. 
  Let 
  \[
     u_{\varepsilon} = f_{\varepsilon} + v_{\varepsilon}.
  \]
Then $\omega_{\varepsilon} = \omega_{f_{\varepsilon}} + dd^c v_{\varepsilon}$ satisfies $\omega_{\varepsilon}^n = e^{u_{\varepsilon}} \omega^n$, and furthermore \eqref{eq:lowRic}, for
  \begin{align*}
     \Ric (\omega_{f_{\varepsilon}} + dd^c v_{\varepsilon})
    & = - dd^c \log (\omega_{f_{\varepsilon}} + dd^c v_{\varepsilon})^n \\
     & = - dd^c \log \omega^n - dd^c f_{\varepsilon} -  dd^c v_{\varepsilon} \\
     & =  - (\omega_{f_{\varepsilon}} + dd^c v_{\varepsilon}) + \varepsilon \omega.
  \end{align*}
Apply the maximum principle to $u_{\varepsilon} = v_{\varepsilon} + f_{\varepsilon}$ in \eqref{eq:MAfeps} yields
\[
   \exp\Big( \sup_X u_{\varepsilon} \Big) \le C \equiv \sup_X \frac{(\varepsilon_0\omega +dd^c \log \omega^n )^n}{\omega^n},
\]
for all $\varepsilon < \varepsilon_0$. This implies \eqref{eq:uppu}.

For \eqref{it:nefL2}, the case $k = n$ follows immediately from the nefness of $K_X$. It suffices to consider $1 \le k \le n-1$. 
Denote
\[
   \sigma_k =\binom{n}{k} \frac{ \omega_{\varepsilon}^k \wedge \omega^{n-k}}{\omega^n}, \quad 0 \le k \le n.
\]
By Newton-MacLaurin's inequality
\[
   \Big[\frac{\sigma_n}{\sigma_0}\Big]^{1/n} \ge \bigg[\frac{\sigma_n}{\sigma_k / \binom{n}{k}} \bigg]^{1/(n-k)}
\]
and \eqref{eq:uppu} we obtain
\[
    \frac{\omega_{\varepsilon}^k \wedge \omega^{n-k}}{\omega_{\varepsilon}^n} = \frac{\sigma_k/ \binom{n}{k}}{\sigma_n} \ge \sigma_n^{k/n - 1} \ge C^{k/n-1}.
\]
Integrating against $\omega_{\varepsilon}^n$ over $X$ yields
\[
   \int_X \omega_{\varepsilon}^{k} \wedge \omega^{n-k} \ge C^{k/n-1} \int_X \omega_{\varepsilon}^n.
\]
This implies that
\[
   \int_X c_1(K_X)^{k} \wedge \omega^{n-k} \ge C^{k/n-1} \int_X c_1(K_X)^n + O(\varepsilon).
\]
Letting $\varepsilon \to 0^+$ yields the result.
\end{proof}
 
Next, we need the following inequality, which is implicitly contained in \cite{WWY} and \cite[pp. 370--372]{WYZ}.
\begin{prop} \label{pr:WYZ}
  Let $M$ be a K\"ahler manifold of complex dimension $n$, and let $\omega$ and $\omega'$ be two K\"ahler metrics on $X$. Suppose that the holomorphic sectional curvature of $\omega$ satisfies
  \[
     H(P; \eta) \equiv R_P(\eta, \bar{\eta}, \eta, \bar{\eta}) \le -\kappa(P) \|\eta\|^4_P \quad \textup{for all $\eta \in T_P X$ and all $P \in X$},
  \]
 and that the Ricci curvature of $\omega'$ satisfies
  \[
     \Ric (\omega') \ge - \lambda \omega' + \mu \omega, 
  \]
 where $\kappa$, $\mu$, and $\lambda$ are continuous functions with $\kappa \ge 0$ and $\mu \ge 0$ on $X$.  Let $S$ be the trace of $\omega$ with respect to $\omega'$; that is,
 \[
     S = \frac{n (\omega')^{n-1} \wedge \omega}{(\omega')^n}.
 \]
  Then 
  \begin{equation*}
     \Delta' \log S \geq  \Big[\frac{(n+1)\kappa}{2n} + \frac{\mu}{n} \Big] S  - \lambda,
  \end{equation*}
  where $\Delta'$ is the Laplacian of $\omega'$.
\end{prop}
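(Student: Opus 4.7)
My plan is to compute $\Delta'\log S$ pointwise in carefully chosen coordinates and then combine three ingredients: the Ricci hypothesis on $\omega'$, a Cauchy--Schwarz absorption of the negative gradient term that comes from the logarithm, and Royden's polarization lemma to convert the sectional curvature bound for $\omega$ into a bisectional bound weighted by the eigenvalues $\lambda_i$ of $\omega$ with respect to $\omega'$.

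Fix $P\in X$ and pick holomorphic coordinates which are normal for $\omega'$ at $P$ and simultaneously diagonalize $\omega$: $g'_{i\bar j}(P)=\delta_{ij}$, $\partial g'(P)=0$, and $g_{i\bar j}(P)=\lambda_i\delta_{ij}$, so $S(P)=\sum_i\lambda_i$. Expanding $\partial_k\partial_{\bar k}(g'^{i\bar j}g_{i\bar j})$ at $P$ via the identity $\partial_k\partial_{\bar k}g'^{i\bar j}|_P = R'_{i\bar j k\bar k}(\omega')$ (from normality) together with the general formula $R_{i\bar j k\bar l}(\omega)=-\partial_k\partial_{\bar l}g_{i\bar j}+g^{\alpha\bar\beta}\partial_k g_{i\bar\beta}\partial_{\bar l}g_{\alpha\bar j}$ (whose Christoffel piece is nonzero since $\omega$ is not normal), I obtain
\[
\Delta'S(P)=\sum_i\lambda_i\Ric(\omega')_{i\bar i}-\sum_{i,k}R_{i\bar i k\bar k}(\omega)+\sum_{i,k,\alpha}\frac{|\partial_k g_{i\bar\alpha}|^2}{\lambda_\alpha}.
\]
The Ricci hypothesis gives $\Ric(\omega')_{i\bar i}(P)\ge -\lambda+\mu\lambda_i$, whence $\sum_i\lambda_i\Ric(\omega')_{i\bar i}\ge -\lambda S+\mu\sum\lambda_i^2\ge -\lambda S+(\mu/n)S^2$ by Cauchy--Schwarz. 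The negative gradient $-|\partial'S|^2/S^2$ in the identity $\Delta'\log S=\Delta'S/S-|\partial'S|^2/S^2$ is absorbed by the last nonnegative term of $\Delta'S$: since $\partial_k S|_P=\sum_j\partial_k g_{j\bar j}$, applying Cauchy--Schwarz with weights $\sqrt{\lambda_j}$ gives $|\partial_k S|^2\le S\sum_j|\partial_k g_{j\bar j}|^2/\lambda_j\le S\sum_{i,\alpha}|\partial_k g_{i\bar\alpha}|^2/\lambda_\alpha$, so dividing by $S^2$ and summing over $k$ yields $|\partial'S|^2/S^2\le S^{-1}\sum_{i,k,\alpha}|\partial_k g_{i\bar\alpha}|^2/\lambda_\alpha$.

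The delicate step is the curvature term $-\sum_{i,k}R_{i\bar i k\bar k}(\omega)$: in the $\omega$-unitary frame $e_i=\partial_i/\sqrt{\lambda_i}$ one has $R_{i\bar i k\bar k}(\omega)=\lambda_i\lambda_k\tilde R_{iikk}$, while the hypothesis $H\le -\kappa$ only controls the diagonals $\tilde R_{iiii}\le -\kappa$, not the off-diagonal bisectional terms $\tilde R_{iikk}$ for $i\ne k$. I invoke Royden's polarization trick: plugging $V=\sum_i\sqrt{a_i}e^{\sqrt{-1}\theta_i}e_i$ into $R(V,\bar V,V,\bar V)\le -\kappa|V|^4$ and averaging over the independent phases $\theta_i\in[0,2\pi]$ kills all unbalanced monomials and leaves $\sum_i\tilde R_{iiii}a_i^2+2\sum_{i\ne k}\tilde R_{iikk}a_ia_k\le -\kappa(\sum a_i)^2$. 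Averaging this with the diagonal bound $\sum_i\tilde R_{iiii}a_i^2\le -\kappa\sum a_i^2$ in the ratio $1{:}1$ and then applying $\sum a_i^2\ge (\sum a_i)^2/n$ yields
\[
\sum_{i,k}\tilde R_{iikk}a_ia_k\le -\frac{(n+1)\kappa}{2n}\Big(\sum_i a_i\Big)^2.
\]
Setting $a_i=\lambda_i$ gives $-\sum_{i,k}R_{i\bar i k\bar k}(\omega)\ge \frac{(n+1)\kappa}{2n}S^2$. Assembling the three estimates,
\[
\Delta'\log S(P)\ge \frac{1}{S}\Big[-\lambda S+\frac{\mu}{n}S^2+\frac{(n+1)\kappa}{2n}S^2\Big]=\Big[\frac{(n+1)\kappa}{2n}+\frac{\mu}{n}\Big]S-\lambda,
\]
which is the claimed inequality. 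The only nonroutine ingredient is Royden's lemma, and that is precisely the step where the sharp constant $(n+1)/(2n)$ emerges.
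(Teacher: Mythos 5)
Your proof is correct and follows essentially the same route as the paper: the same three-term decomposition of $\Delta' S$ into a Ricci term, a nonnegative first-derivative term that absorbs $|\partial' S|^2/S$ via Cauchy--Schwarz, and a curvature term of $\omega$ handled by Royden's lemma with the sharp constant $(n+1)/(2n)$. The only differences are cosmetic: you work in coordinates normal for $\omega'$ with $\omega$ diagonalized (the paper does the dual, normalizing $\omega$ and citing the resulting formula for $\Delta'S$ from its earlier work), and you supply the polarization proof of Royden's lemma rather than citing it.
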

\begin{proof}
We denote by $g_{i\bar{j}}$, $R_{i\bar{j}}$, and $R_{i\bar{j}k\bar{l}}$ the components of metric tensor, Ricci curvature tensor, and the curvature tensor of $\omega$, respectively; and similarly by $g'_{i\bar{j}}$, $R'_{i\bar{j}}$, and $R'_{i\bar{j}k\bar{l}}$ the corresponding tensors of $\omega'$. Choose a normal coordinate system $(z^1,\ldots, z^n)$ near a point $P$ of $M$ such that
\[
   g_{i\bar{j}} = \delta_{ij}, \quad \frac{\p}{\p z^k} g_{i\bar{j}} = 0, \quad g'_{i\bar{j}} = \delta_{ij} g'_{i\bar{i}}
\]
at $P$. By \cite[p. 623, (2.3)]{WWY} we have
\begin{equation} \label{eq:LapS}
   \Delta' S =   \sum_{i} \frac{R'_{i\bar{i}}}{(g'_{i\bar{i}})^2}  + \sum_{i,j,k} \frac{|\partial g'_{i\bar{j}}/\partial z^k|^2}{g'_{i\bar{i}} (g'_{j\bar{j}})^2 g'_{k\bar{k}}} - \sum_{i,k} \frac{R_{i\bar{i}k\bar{k}}}{g'_{i\bar{i}} g'_{k\bar{k}}} \qquad \textup{at $P$}. 
\end{equation}
As in \cite[p. 372]{WYZ} we apply the Cauchy-Schwarz inequality to obtain
\begin{equation}\label{eq:3rdt}
\sum_{i,j,k} \frac{|\partial g'_{i\bar{j}}/\partial z^k|^2}{g'_{i\bar{i}} (g'_{j\bar{j}})^2 g'_{k\bar{k}}}
\ge \sum_{i,k} \frac{|\p g'_{i\bar{i}}/ \p z^k|^2}{(g'_{i\bar{i}})^3 g'_{k\bar{k}}}\ge \frac{|\nabla' S|^2}{S}.
\end{equation}
It follows from Royden's lemma~\cite[p. 624]{WWY} that
\begin{equation} \label{eq:Ht}
   - \sum_{i,k} \frac{R_{i\bar{i}k\bar{k}}}{g'_{i\bar{i}} g'_{k\bar{k}}}
   \ge \frac{(n+1)\kappa}{2n} S^2.
\end{equation}
By the assumption on Ricci curvature we have
\begin{equation} \label{eq:Rict}
  \sum_i \frac{R'_{i\bar{i}}}{(g'_{i\bar{i}})^2} \ge - \lambda S + \mu \sum_i \frac{1}{(g'_{i\bar{i}})^2} \ge - \lambda S + \frac{\mu}{n} S^2.
\end{equation}
Plugging \eqref{eq:3rdt}, \eqref{eq:Ht}, and \eqref{eq:Rict} into \eqref{eq:LapS} yields the desired inequality.
\end{proof}

We are now ready to prove Lemma~\ref{le:nefbig}  and Theorem~\ref{th:WY2}.

\noindent \emph{Proof of Lemma~\ref{le:nefbig}.}
Since $K_X$ is nef,  by Proposition~\ref{pr:nefL2} \eqref{it:nefL1} for each $\varepsilon>0$   there exists a K\"ahler metric $\omega_{\varepsilon} = \varepsilon \omega + dd^c \log \omega^n + dd^c u_{\varepsilon}$ on $X$ such that
\[
   \Ric(\omega_{\varepsilon}) \ge -\omega_{\varepsilon} \quad \textup{and} \quad \omega_{\varepsilon} \le C \omega^n \quad \textup{on $X$},
\]
where $C>0$ is a constant independent of $\varepsilon$. Let
\[
   S_{\varepsilon} = \frac{n\omega_{\varepsilon}^{n-1} \wedge \omega}{\omega_{\varepsilon}^n}. 
\]
By Proposition~\ref{pr:WYZ} we have
\begin{equation} \label{eq:C2ineq}
   \Delta' \log S_{\varepsilon} \ge \frac{(n+1)\kappa}{2 n} S_{\varepsilon} - 1 \quad \textup{on $X$}.
\end{equation}
Here
\[
   \kappa(P) 
   = - \sup_{\eta \in T_p X \setminus\{0\} } \frac{R(\eta, \bar{\eta}, \eta, \bar{\eta})}{\|\eta\|^4_{P, \omega}} > 0 \quad \textup{for all $P \in X$}.
\]
Since $X$ is compact, there exists a constant $\kappa_0 > 0$ such that $\kappa \ge \kappa_0 > 0$. Applying the maximum principle to $S_{\varepsilon}$ in \eqref{eq:C2ineq} yields
\[
    S_{\varepsilon} \le \frac{2n}{(n+1)\kappa_0} \quad \textup{on $X$}.
\] 
On the other hand, by Newton-Macluarin's inequality
\[
   S_{\varepsilon} = \frac{\sigma_{n-1}}{\sigma_{n}} \ge n \sigma_n^{-1/n}
\]
where $\sigma_k \equiv \binom{n}{k} \omega_{\varepsilon}^k \wedge \omega^{n-k} / \omega^n$ for each $1 \le k \le n$.
It follows that
\[
   \int_X \omega_{\varepsilon}^n = \int_X \sigma_n \omega^n \ge \frac{(n+1)^n}{2^n} \kappa_0^n \int_X \omega^n > 0.
\]
Note that
\[
   \int_X \omega_{\varepsilon}^n = \int_X c_1(K_X)^n + \varepsilon n \int_X c_1(K_X)^{n-1} \wedge \omega + O(\varepsilon^2).
\]
Letting $\varepsilon \to 0^+$ yields
\[
   \int_X c_1(K_X)^n \ge \frac{(n+1)^n}{2^n} \kappa_0^n \int_X \omega^n > 0. 
\]
This implies that $K_X$ is big. This and Proposition~\ref{pr:nefL2} \eqref{it:nefL2} imply \eqref{eq:c1Lk}.
\qed

\noindent  \emph{Proof of Theorem~\ref{th:WY2}}. As in the proof of Lemma~\ref{le:nefbig}, we have for each $\varepsilon>0$ a K\"ahler metric $\omega_{\varepsilon} = \varepsilon \omega + dd^c \log \omega^n + dd^c u_{\varepsilon}$ satisfying $\omega^n_{\varepsilon} =  e^{u_{\varepsilon}} \omega^n$ and
\[
   \textup{Ric}(\omega_{\varepsilon}) = - \omega_{\varepsilon} + \varepsilon \omega, \quad \max_X u_{\varepsilon} \le C, \quad C^{-1} \le S_{\varepsilon} \le C.
\]
We denote by $C>0$ a generic constant depending only on $\omega$ and $n$. Since
\[
  S_{\varepsilon}^{n-1} \ge \sigma_1/ \sigma_n,
\]
We obtain $\sigma_1 \le C$, and hence $C^{-1} \omega \le \omega_{\varepsilon} \le C \omega$ and a uniform estimate for $u_{\varepsilon}$ up to the second order. Then, a standard process shows that $\|u_{\varepsilon}\|_{C^{k,\alpha}(X)} \le C$ for any nonnegative integer $k$ and $0 < \alpha <1$ (see \cite[pp. 360 and 363]{yau78-calabi} for example). Thus, there is a sequence $\{u_{\varepsilon_l}\}$ converges in the $C^{k,\alpha}(X)$-norm to a solution $u$ of the equation
\[
    (dd^c \log \omega^n + dd^c u)^n = e^u \omega^n
\]
with $\omega_u \equiv dd^c \log \omega^n + dd^c u > 0$ on $X$.
This implies that $\omega_u$ is the unique K\"ahler-Einstein metric with $\textup{Ric}(\omega_u) = - \omega_u$. \qed

\end{document}